\documentclass{amsart}
\calclayout
 \usepackage{amssymb,amsmath,amsfonts,epsfig,latexsym,tikz}
   \usepackage[alphabetic]{amsrefs}
 \usepackage{tikz-cd}
\usepackage{hyperref}
\usepackage{enumerate}
\usepackage{mathtools}
\usepackage{verbatim}
\usepackage{cleveref}
\usepackage[shortlabels]{enumitem}
\usepackage{subcaption}

\usetikzlibrary{positioning}
\usetikzlibrary{matrix}
\usetikzlibrary{decorations}
\usetikzlibrary{decorations.pathreplacing, decorations.pathmorphing, angles,quotes}
 
 \newtheorem{theorem}{Theorem}[section]

\newtheorem{conjecture}[theorem]{Conjecture}

\theoremstyle{definition}
\newtheorem{remark}[theorem]{Remark}
\newtheorem{example}[theorem]{Example}

\begin{document}

\title{Counterexamples to two conjectures about matroids}          
    
\author{Matt Larson}
\begin{abstract}
We give counterexamples to two well-known conjectures about matroids: White's conjecture on the generation of the toric ideal by symmetric exchange binomials, and a conjecture of Mason on the log-concavity of the counts of flats of a given rank. 
\end{abstract}
 
\maketitle

\vspace{-20 pt}

In this paper, we describe counterexamples to two conjectures about matroids, White's conjecture that the symmetric exchange binomials generate the toric ideal of a matroid (Conjecture~\ref{conj:white}(\ref{whitesymmetric})) and a conjecture of Mason that counts of flats of a given rank in a matroid form a log-concave sequence (Conjecture~\ref{conj:logconc}). Our proofs do not require the use of a computer. We also recount the history of these conjectures and related results. 

One feature of these conjectures is that, for matroids of reasonable size, they can be checked easily using a computer. There are many known methods for producing interesting matroids. Until recently, it was tedious to test if these methods give counterexamples, but frontier large language models are capable of implementing these methods and checking if they give counterexamples. I have included detailed descriptions of how the counterexamples in this paper were found.

\subsection*{Acknowledgments}
This work was conducted while I was at the Institute for Advanced Study, where I am supported by the Charles Simonyi Endowment and the Oswald Veblen Fund. I am grateful to Krist\'of B\'erczi, Luis Ferroni, Mateusz Micha{\l}ek,  and Tam\'as Schwarcz for helpful comments on a previous draft. 
The use of ChatGPT 5.5 Pro in finding these counterexamples is described in Sections~\ref{sec:white} and \ref{sec:logconc}. I found the proofs given below and wrote the exposition. ChatGPT 5.5 Pro and Claude Opus 4.8 were also used for proofreading and for generating the figures.

\section{White's conjecture}\label{sec:white}

Given bases $B_1$ and $B_2$ of a matroid $\mathrm{M}$, a \emph{symmetric exchange} of the ordered pair $(B_1, B_2)$ is an ordered pair of bases $(\tilde{B}_1, \tilde{B}_2)$ with the property that there are $x \in B_1$ and $y \in B_2$ such that $\tilde{B}_1 = (B_1 \setminus x) \cup y$ and $\tilde{B}_2 = (B_2 \setminus y) \cup x$. It is known that for any pair $(B_1, B_2)$ and $x \in B_1 \setminus B_2$, there is some $y \in B_2 \setminus B_1$ such that $(B_1 \setminus x) \cup y$ and $(B_2 \setminus y) \cup x$ are both bases. In \cite{WhiteBases}, White made three conjectures about basis exchange properties of matroids. These conjectures are in decreasing order of strength. 

\begin{conjecture}\cite[Conjecture 12]{WhiteBases}\label{conj:white}
Let $\mathrm{M}$ be a matroid, and let $(B_1, \dotsc, B_s)$ and $(B_1', \dotsc, B_s')$ be sequences of bases of $\mathrm{M}$ such that the multiset union of $B_1, \dotsc, B_s$ is equal to the multiset union of $B_1', \dotsc, B_s'$. Then
\begin{enumerate}
\item \label{whitenc} It is possible to obtain $(B_1', \dotsc, B_s')$ from $(B_1, \dotsc, B_s)$ by repeatedly choosing two bases $B_i$ and $B_j$ with $i < j$, choosing a symmetric exchange $(\tilde{B}_i, \tilde{B}_j)$ of $(B_i, B_j)$, and replacing $(B_1, \dotsc, B_s)$ by $(B_1, \dotsc, B_{i-1}, \tilde{B}_i, \dotsc, B_{j-1}, \tilde{B}_j, \dotsc, B_s)$. 
\item \label{whitesymmetric} It is possible to obtain the multiset $\{B_1', \dotsc, B_s'\}$ from the multiset $\{B_1, \dotsc, B_s\}$ by repeatedly choosing two bases $B_i$ and $B_j$, choosing a symmetric exchange $(\tilde{B}_i, \tilde{B}_j)$, and replacing $\{B_1, \dotsc, B_s\}$ by $\{B_1, \dotsc, B_{i-1}, \tilde{B}_i, \dotsc, B_{j-1}, \tilde{B}_j, \dotsc, B_s\}$. 
\item \label{whitequadratic} It is possible to obtain the multiset $\{B_1', \dotsc, B_s'\}$ from the multiset $\{B_1, \dotsc, B_s\}$ by repeatedly choosing two bases $B_i$ and $B_j$, choosing any pair of bases $(\overline{B}_i, \overline{B}_j)$ with the property that the multiset union of $B_i$ and  $B_j$ is equal to the multiset union of $\overline{B}_i$ and $\overline{B}_j$, and replacing $\{B_1, \dotsc, B_s\}$ by the multiset $\{B_1, \dotsc, B_{i-1}, \overline{B}_i, \dotsc, B_{j-1}, \overline{B}_j, \dotsc, B_s\}$. 
\end{enumerate}
\end{conjecture}

In Example~\ref{ex:whitecounterexample}, we will give a counterexample to Conjecture~\ref{conj:white}(\ref{whitesymmetric}). We will first discuss the history of Conjecture~\ref{conj:white} and related results. 

Clearly Conjecture~\ref{conj:white}(\ref{whitesymmetric}) holds for a matroid $\mathrm{M}$ if and only if Conjecture~\ref{conj:white}(\ref{whitequadratic}) holds for $\mathrm{M}$ and Conjecture~\ref{conj:white}(\ref{whitesymmetric}) holds for $\mathrm{M}$ when $s=2$. We will be interested in the case of Conjecture~\ref{conj:white}(\ref{whitesymmetric}) when $s=2$, which is sometimes called Farber's conjecture after \cites{Farber1,Farber2}.

\begin{conjecture}\label{conj:farber}
Let $\mathrm{M}$ be a matroid, and let $\{B_1, B_2\}$ and $\{B_1', B_2'\}$ be unordered pairs of bases such that the multiset union of $B_1$ and $B_2$ is equal to the multiset union of $B_1'$ and $B_2'$. Then it is possible to obtain  $\{B_1', B_2'\}$ from $\{B_1, B_2\}$ by repeatedly replacing $\{B_1, B_2\}$ by a symmetric exchange $\{\tilde{B}_1, \tilde{B}_2\}$. 
\end{conjecture}

We will give a counterexample to Conjecture~\ref{conj:farber} in Example~\ref{ex:whitecounterexample}.

Clearly Conjecture~\ref{conj:white}(\ref{whitenc}) holds for a matroid $\mathrm{M}$ if and only if Conjecture~\ref{conj:white}(\ref{whitesymmetric}) holds for $\mathrm{M}$ and the following strengthening of Conjecture~\ref{conj:farber} holds for $\mathrm{M}$: for any ordered pair of bases $(B_1, B_2)$, it is possible to obtain $(B_2, B_1)$ by a sequence of symmetric exchanges. As observed in \cite[Section 4]{ToricIdeal}, this holds if Conjecture~\ref{conj:farber} holds for $\mathrm{M} \oplus \mathrm{M}$. If Conjecture~\ref{conj:farber} holds for $\mathrm{M} \oplus \mathrm{M}$, then the three forms of Conjecture~\ref{conj:white} for $\mathrm{M}$ are equivalent. 
A different way of reducing Conjecture~\ref{conj:white}(\ref{whitenc}) to Conjecture~\ref{conj:white}(\ref{whitesymmetric}) is given in \cite[Theorem 2.5]{BoninSparse}.

\smallskip

Conjecture~\ref{conj:white} can be interpreted in terms of the toric ideal of the matroid, the ideal of the homogeneous coordinate ring of the toric variety corresponding to the matroid polytope $P(\mathrm{M})$. If the ground set of $\mathrm{M}$ is $\{1, \dotsc, n\}$, this ideal can be described as the kernel of the map $k[x_B : B \text{ basis}] \to k[x_1, \dotsc, x_n]$ which sends $x_B$ to $\prod_{i \in B} x_i$. 
Conjecture~\ref{conj:white}(\ref{whitequadratic}) states that this ideal is generated by quadrics and Conjecture~\ref{conj:white}(\ref{whitesymmetric}) states that this ideal is generated by quadrics corresponding to symmetric exchange binomials. Conjecture~\ref{conj:white}(\ref{whitenc}) can be interpreted as giving generators for a version of the toric ideal in a non-commutative polynomial ring.

In \cite{HerzogHibi}, Herzog and Hibi conjectured an extension of Conjecture~\ref{conj:white}(\ref{whitesymmetric}) to polymatroids, and they asked whether the homogeneous coordinate ring of the toric variety of $P(\mathrm{M})$ is Koszul, and whether the toric ideal satisfies the stronger property of having a quadratic Gr\"{o}bner basis. By work of Sturmfels \cite[Theorem 8.3, Corollary 8.9]{SturmfelsGrobner}, it is known that the toric ideal has a quadratic Gr\"{o}bner basis if and only if the polytope $P(\mathrm{M})$ has a regular unimodular triangulation whose underlying simplicial complex is flag, i.e., all minimal non-faces have size $2$. A regular unimodular triangulation of $P(\mathrm{M})$ was constructed in \cite{BackmanLiu}, but it was recently shown that the matroid polytope of the Fano matroid does not have a flag regular unimodular triangulation \cite{Grobnerwhite1,Grobnerwhite2}.

Note that, in Conjecture~\ref{conj:farber}, we can immediately reduce to the case when $B_1 \cap B_2 = \emptyset$ and $B_1 \cup B_2$ is the ground set of $\mathrm{M}$ by deleting elements not contained in $B_1 \cup B_2$ and contracting elements in $B_1 \cap B_2$. This means that for  matroids of rank at most $r$, Conjecture~\ref{conj:farber} reduces to the case of matroids on a ground set of size at most $2r$. This is a finite computation, and one can check that Conjecture~\ref{conj:farber} holds for matroids of rank at most $4$. 

\smallskip

There is a very extensive literature proving White's conjectures or their variants for various families of matroids. We mention some of the most relevant results. 
Conjecture~\ref{conj:white}(\ref{whitesymmetric}) was proven for graphic matroids by Blasiak \cite{Blasiak} (after Conjecture~\ref{conj:farber} was proven in \cite{Farber1} for graphic matroids); this result was extended by McGuinness \cite{McGuinness}. 
Conjecture~\ref{conj:white}(\ref{whitesymmetric}) was proven for transversal matroids in \cite{ToricIdeal} (after Conjecture~\ref{conj:farber} was proven in \cite{Farber2} for transversal matroids). The existence of a quadratic Gr\"{o}bner basis for the toric ideal was shown for lattice path matroids, a family of transversal matroids, in \cite{Schweig}; this was earlier essentially proved by Knudsen \cite[Chapter III, Lemma 2.4]{KKMS}, see \cite[Theorem 3.3]{HPPS}. 
Conjecture~\ref{conj:white}(\ref{whitesymmetric}) was proven for sparse paving matroids in \cite{BoninSparse} and then for paving matroids in \cite{YuYuen}, following ideas introduced in \cite{innerproj}. 
Conjecture~\ref{conj:white}(\ref{whitesymmetric}) was proven for matroids of rank $3$ by Kashiwabara \cite{Kashiwabara}. 
Conjecture~\ref{conj:farber} was proven for split matroids in \cite{SplitExchange} and for regular matroids in \cite{RegularExchange}. 

It was shown that the symmetric exchange binomials generate the toric ideal up to saturation in \cite{ToricIdeal}. 
In \cite{LasonToric}, it was shown that the toric ideal of a matroid of rank $r$ has a Gr\"{o}bner basis consisting of polynomials of degree at most $(r+3)!$, and Conjecture~\ref{conj:white}(\ref{whitesymmetric}) was proven if $s$ is larger than a certain function of $r$. 
In particular, this reduces checking any version of Conjecture~\ref{conj:white} to a finite computation.

\begin{figure}
\begin{tikzpicture}[scale = 0.9,
  box/.style={
    draw=black, rounded corners=2pt,
    text width=2.7cm, align=center,
    minimum height=1.1cm, inner sep=5pt,
    font=\sffamily\small, line width=0.6pt
  },
  redbox/.style={box, fill=red!75, text=white, draw=red!55!black},
  whitebox/.style={box, fill=white, text=black},
  impl/.style={double, double distance=1.8pt, line width=0.5pt,
               -{Implies[length=2.6mm,width=3.4mm]}},
  biimpl/.style={double, double distance=1.8pt, line width=0.5pt,
                 {Implies[length=2.6mm,width=3.4mm]}-{Implies[length=2.6mm,width=3.4mm]}},
  gather/.style={line width=0.7pt}
]
 
\node[redbox]   (qg)  at (0, 0)    {Quadratic Gr\"{o}bner};
\node[whitebox] (kos) at (4.3, 0)  {Koszul};
\node[whitebox] (qw)  at (8.6, 0)  {Quadratic White};
\node[redbox]   (far) at (14, 0)   {Farber};
\node[redbox]   (se)  at (11.3, 3) {Symmetric exchange White};
\node[redbox]   (nc)  at (6.5, 3)  {Non-commutative White};
 
\draw[impl] (qg)  -- (kos);
\draw[impl] (kos) -- (qw);
 
\draw[biimpl] (nc) -- (se);

\draw[impl] (se) -- (qw);
\draw[impl] (se) -- (far);
 
\coordinate (jct) at (11.3, 0);
\draw[gather] (qw)  -- (jct);
\draw[gather] (far) -- (jct);
\fill (jct) circle (2.2pt);
\draw[impl] (jct) -- (se.south);

\end{tikzpicture}
\caption{Implications between some variants of White's conjecture. The conjectures which are known to be false are colored red.}
\end{figure}

\begin{remark}
We are not aware of any family of matroids where Conjecture~\ref{conj:white}(\ref{whitequadratic}) is known, but Conjecture~\ref{conj:white}(\ref{whitesymmetric}) is open, or where the homogeneous coordinate ring of the toric variety of $P(\mathrm{M})$ is known to be Koszul, but its ideal is not known to have a quadratic Gr\"{o}bner basis.  However, \cite{MoriOhsugi} gives some evidence for Conjecture~\ref{conj:white}(\ref{whitequadratic}), and Conca proves that the homogeneous coordinate rings of transversal polymatroids are Koszul in \cite{ConcaWhite}.
\end{remark}

\begin{example}\label{ex:whitecounterexample}
Let $\mathrm{M}$ be the rank $9$ binary matroid on $\{1, \dotsc, 18\}$ realized by the row span of the following matrix 
$$\left(\begin{array}{cccccc|cccccc|cccccc}
0 & 0 & 1 & 1 & 0 & 0 & 0 & 1 & 0 & 1 & 0 & 1 & 0 & 0 & 0 & 0 & 0 & 0 \\
0 & 1 & 0 & 1 & 0 & 1 & 0 & 0 & 0 & 0 & 0 & 0 & 1 & 1 & 0 & 0 & 0 & 0 \\
0 & 0 & 0 & 0 & 0 & 0 & 1 & 1 & 0 & 0 & 0 & 0 & 0 & 1 & 0 & 1 & 0 & 1 \\
\hline
1 & 1 & 1 & 1 & 0 & 0 & 0 & 0 & 0 & 0 & 0 & 0 & 0 & 0 & 0 & 0 & 0 & 0 \\
0 & 0 & 1 & 1 & 1 & 1 & 0 & 0 & 0 & 0 & 0 & 0 & 0 & 0 & 0 & 0 & 0 & 0 \\
\hline
0 & 0 & 0 & 0 & 0 & 0 & 1 & 1 & 1 & 1 & 0 & 0 & 0 & 0 & 0 & 0 & 0 & 0 \\
0 & 0 & 0 & 0 & 0 & 0 & 1 & 1 & 0 & 0 & 1 & 1 & 0 & 0 & 0 & 0 & 0 & 0 \\
\hline
0 & 0 & 0 & 0 & 0 & 0 & 0 & 0 & 0 & 0 & 0 & 0 & 1 & 1 & 1 & 1 & 0 & 0 \\
0 & 0 & 0 & 0 & 0 & 0 & 0 & 0 & 0 & 0 & 0 & 0 & 1 & 1 & 0 & 0 & 1 & 1
\end{array}\right);$$
the vertical and horizontal lines have been added to make the block structure more visible. 
\end{example}

\begin{theorem}
Conjecture~\ref{conj:farber} and Conjecture~\ref{conj:white}(\ref{whitesymmetric}) fail in Example~\ref{ex:whitecounterexample}. 
\end{theorem}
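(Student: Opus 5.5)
Since Conjecture~\ref{conj:white}(\ref{whitesymmetric}) with $s=2$ is exactly Conjecture~\ref{conj:farber}, it is enough to find two unordered pairs of bases $\{B_1,B_2\}$ and $\{B_1',B_2'\}$ of $\mathrm{M}$ with the same multiset union that are not connected by symmetric exchanges. Because $\mathrm{M}$ has rank $9$ on $18$ elements, I would look for partitions of $\{1,\dots,18\}$ into two complementary bases. For such a pair $\{B,E\setminus B\}$, a symmetric exchange again gives a complementary pair of bases. So the plan is to view the set $\mathcal{P}$ of unordered partitions of $E$ into two bases as the vertices of a graph, with an edge whenever two partitions differ by a symmetric exchange. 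I would then show that this graph is disconnected. Concretely, I would find a nonempty proper subset $\mathcal{C}\subset\mathcal{P}$ that is closed under symmetric exchanges, and exhibit one partition inside $\mathcal{C}$ and one outside it.

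The block structure of the matrix suggests the construction. The three groups of six columns, together with the pairs of rows $4$--$5$, $6$--$7$ and $8$--$9$, look like three copies of a small gadget, with the first three rows gluing them together. I would first understand each gadget. Take the columns $\{1,\dots,6\}$ and contract or project away the gluing rows: on six elements with local rank two or three, the gadget should be something like a graphic or binary matroid that can be split into two halves in essentially two ways, and these two ways should not be connected to each other by local exchanges. Next I would use the top three rows, which meet the blocks in the column pairs $\{3,4\},\{8,10,12\},\{2,4,6\},\{13,14\},\{7,8\},\{14,16,18\}$, to attach a parity or orientation bit to each block. The invariant I aim for is then a quantity $\epsilon(B)\in\mathbb{Z}/2$ (or a cyclic ordering) defined from how $B$ meets each block, which is well defined on unordered pairs, i.e.\ invariant under $B\leftrightarrow E\setminus B$, and which is preserved by every symmetric exchange.

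The key steps, in order, are these. First, using the binary representation, characterize bases by checking invertibility of $9\times 9$ submatrices over $\mathbb{F}_2$; the block-triangular shape reduces this to small determinants. Second, classify all complementary basis pairs; I expect that in every such pair, each block of six columns is split in a restricted way, for example $3+3$ or $2+4$. Third, show that a single symmetric exchange $x\leftrightarrow y$ can only change the splitting of one or two blocks at a time in a controlled manner, so $\epsilon$ is preserved. Fourth, exhibit explicit $B_1$ and $B_1'$ with $\epsilon(B_1)\neq\epsilon(B_1')$. The step I expect to be the main obstacle is the third. A symmetric exchange can move an element between blocks, and ruling out every exchange that would flip the invariant requires a careful case analysis of cocircuits and circuits through $x$ and $y$. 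I would try to organize it as follows: $(B\setminus x)\cup y$ is a basis if and only if $y$ lies in the fundamental cocircuit of $x$, so I would compute the fundamental cocircuits combinatorially from the block structure rather than enumerating all exchanges.
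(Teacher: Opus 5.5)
You correctly reduce to $s=2$: work with complementary pairs $\{B,E\setminus B\}$ of bases, which symmetric exchanges preserve, and look for a nonempty proper family of such pairs closed under exchange. Your expectation that each block is split $3+3$ or $2+4$ is also right. But the proposal stops where the proof has to start. The invariant $\epsilon$ is never defined, and the step you flag as the main obstacle (every symmetric exchange preserves $\epsilon$) is left open. So what you have is a search strategy, not an argument. Your heuristics for finding $\epsilon$ also do not fit this matroid. The relevant local structure is the restriction of $\mathrm{M}$ to a block. It has rank $4$ and three series pairs $\{2i-1,2i\}$, and the union of any two of these pairs is a circuit. The obstruction is not a parity or orientation bit, read off from the gluing rows, that each exchange conserves.

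The missing idea is to count $s(B)$, the number of pairs $\{2i-1,2i\}$ split by $B$.
\begin{itemize}
\item Since $|B|=9$, $s(B)$ is odd, and $s(B)=s(E\setminus B)$.
\item A symmetric exchange changes $s$ by $0$ or $\pm 2$. This needs no analysis of circuits or fundamental cocircuits.
\end{itemize}
The real content is that $s(B)\in\{3,7,9\}$ for every complementary basis pair, so the values $1$ and $5$ never occur. This follows from a short classification:
\begin{itemize}
\item $B$ meets the three blocks in sizes $(2,3,4)$ (in some order) or $(3,3,3)$.
\item An independent $4$-subset of a block must split two pairs. Hence so does a $2$-subset whose complement in the block is independent.
\item A $3$-subset of a block splits one or three pairs.
\end{itemize}
This rules out $s=1$. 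Up to the automorphism group $(S_4)^3\rtimes\mathbb{Z}/3\mathbb{Z}$, it leaves only four $9$-sets with $s=5$. These pair up under complementation, and in each pair one member is not a basis, so no complementary basis pair has $s=5$. To finish, exhibit complementary basis pairs with $s=3$ (e.g.\ $B=\{1,2,3,7,8,9,13,14,15\}$) and with $s=9$ (e.g.\ $B=\{1,3,5,7,9,11,13,15,18\}$). In your language, $\mathcal{C}=\{\{B,E\setminus B\}: s(B)=3\}$ is closed because any exit from it would land at $s\in\{1,5\}$, where no complementary basis pairs exist. The separating invariant is a threshold on a quantity that moves in steps of $2$, not something each exchange conserves.
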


\begin{proof}
The ground set $\{1, \dotsc, 18\}$ divides into three blocks: $\{1, \dotsc, 6\}$, $\{7, \dotsc, 12\}$, and $\{13, \dotsc, 18\}$. It is not hard to check that the element of $S_{18}$ which cyclically permutes these blocks (sending $1$ to $7$, $7$ to $13$, $13$ to $1$, $2$ to $8$, and so on) is an automorphism of this matroid. The restriction of $\mathrm{M}$ to each block is a matroid of rank $4$ which has three coparallel classes of size $2$. This divides the ground set into $9$ pairs of the form $\{2i - 1, 2i\}$ for $i = 1, \dotsc, 9$. There is an action of $S_4$ on each block, where the action of $S_4$ on a block is the same as its action on the edges of the complete graph $\mathrm{K}_4$, and the pairs correspond to vertex-disjoint edges. 
These elements generate a subgroup of the automorphism group of $\mathrm{M}$ isomorphic to $(S_4)^3 \rtimes \mathbb{Z}/3 \mathbb{Z}$.

Let $B$ be a basis of $\mathrm{M}$ such that $B^c$ is also a basis. Let $s(B)$ be the number of pairs $\{2i - 1, 2i\}$ which are split by $B$, i.e., $|B \cap \{2i-1, 2i\}| = 1$. Because the rank of $\mathrm{M}$ is odd, $s(B)$ must be odd. Note that $s(B) = s(B^c)$, and that if $\tilde{B}$ differs from $B$ by a single symmetric exchange, then $s(B) - s(\tilde{B}) \in \{-2, 0, 2\}$. We claim that $s(B)$ lies in $\{3, 7, 9\}$. All three values occur: we have
$$s(\{1, 2, 3, 7, 8, 9, 13, 14, 15\}) = 3, \text{  } s(\{1, 2, 3, 5, 7, 9, 11, 13, 15\}) = 7, \text{ and }s(\{1, 3, 5, 7, 9, 11, 13, 15, 18\}) = 9,$$
and each of the sets above is a basis whose complement is also a basis. 
The pairs $\{B, B^c\}$ with $s(B) = 3$ cannot be transformed into a pair $\{B', (B')^c\}$ with $s(B') \in \{7, 9\}$, showing that Conjecture~\ref{conj:farber} and Conjecture~\ref{conj:white}(\ref{whitesymmetric}) fail for $\mathrm{M}$. 

Because $B$ and $B^c$ are both bases and the rank of each block is $4$, the intersection of $B$ with any block has either $2, 3$, or $4$ elements. The intersection of $B$ with the three blocks either gives sets of size $2, 3,$ and $4$ or sets of size $3, 3,$ and $3$. The union of any two pairs in a block is a circuit, so an independent set of size $4$ in a block must split $2$ pairs. This implies that $s(B) \not=1$. 
Because both $B$ and $B^c$ are bases, an independent set of size $2$ in a block must split $2$ pairs. If the intersection of $B$ with a block is a set of size $4$, then, up to the action of the automorphisms, the set must be $\{1, 2, 3, 5\}$. If the intersection has size $2$, then, up to the action of automorphisms, the set must be $\{1, 3\}$. If the intersection has size $3$, then there are three possibilities: we could get $\{1, 3, 5\}, \{1, 3, 6\}$, or $\{1, 2, 3\}$; there are three orbits of subgraphs of $\mathrm{K}_4$ with $3$ edges.  These split $3, 3$, and $1$ pairs, respectively. 

We consider sets of size $9$ which split exactly $5$ pairs and satisfy the conditions in the previous paragraph, up to the action of automorphisms. There are only four: representatives are
$$\{1, 2, 3, 7, 8, 9, 13, 15, 18\}, \text{ }\{1, 2, 3, 7, 8, 9, 13, 15, 17\}, \text{  } \{1, 2, 3, 5, 7, 9, 13, 14, 15\}, \text{ and }\{1, 2, 3, 5, 7, 8, 9, 13, 15\}.$$
The complement of a set of the first type is a set of the second type, and the complement of a set of the third type is a set of the fourth type. The first and third sets are bases, but the second and fourth sets are not, so none of these gives rise to pairs of bases $\{B, B^c\}$ with $s(B) = 5$. 
\end{proof}

\subsection*{How Example~\ref{ex:whitecounterexample} was found}

After checking that there are no counterexamples to Conjecture~\ref{conj:farber} in rank at most $4$, I prompted ChatGPT 5.5 Pro to try to check Conjecture~\ref{conj:farber} in small rank using a SAT solver. It implemented an approach and claims to have used this to prove Conjecture~\ref{conj:farber} in rank $5$ and $6$\footnote{I have not verified the correctness of this computation.}. I checked Conjecture~\ref{conj:farber} for many random matroids of ranks $7, 8, 9$, and $10$ which are realizable over $\mathbb{F}_2, \mathbb{F}_3, \mathbb{F}_4$, or $\mathbb{F}_5$. I prompted ChatGPT to look for a counterexample around 20 times; see \href{https://chatgpt.com/share/6a27edd1-c608-83ea-95f7-c2b79b2a88ac}{here} for a typical chat. I then prompted it to look for ternary matroids of ranks $7, 8, 9$, and $10$ and then to look for binary matroids of ranks $7$ and $8$. When I prompted it to look at binary matroids of rank $9$, it found a matroid isomorphic to the matroid in Example~\ref{ex:whitecounterexample}; see \href{https://chatgpt.com/share/6a27ed96-0374-83ea-8daf-8057d7d6ceb5}{here}. 

Another description of the matroid in Example~\ref{ex:whitecounterexample} is that it is isomorphic to the binary matroid realized by the row span of the $9 \times 18$ matrix $[I|A]$, where $I$ is the $9 \times 9$ identity matrix and 
$$A = \begin{pmatrix} 1 & 0 & 0 & 1 & 0 & 0 & 1 & 0 & 1\\ 
1 & 1 & 0 & 0 & 1 & 0 & 0 & 1 & 0 \\
0 & 1 & 1 & 0 & 0 & 1 & 0 & 0 & 1  \\
1 & 0 & 1 & 1 & 0 & 0 & 1 & 0 & 0 \\ 
0 & 1 & 0 & 1 & 1 & 0 & 0 & 1 & 0 \\ 
0 & 0 & 1 & 0 & 1 & 1 & 0 & 0 & 1 \\ 
1 & 0 & 0 & 1 & 0 & 1 & 1 & 0 & 0 \\ 
0 & 1 & 0 & 0 & 1 & 0 & 1 & 1 & 0 \\ 
0 & 0 & 1 & 0 & 0 & 1 & 0 & 1 & 1
\end{pmatrix}.$$
In \cite[Example 15]{WhiteBases}, White considered the family of binary matroids represented by matrices of the form $[I|A]$ where $A$ is a circulant matrix whose nonzero entries in the first row are consecutive. Subspaces generated by the row span of a matrix $[I|A]$, where $A$ is a circulant matrix, are called \emph{quasi-cyclic codes} in the coding theory literature; see \cite{QuasiCyclic}.

We have been unable to find a counterexample to Conjecture~\ref{conj:white}(\ref{whitequadratic}). One issue is that it is computationally difficult to check Conjecture~\ref{conj:white}(\ref{whitequadratic}). On a laptop with an Intel i7-1250U chip, checking that Conjecture~\ref{conj:farber} fails in Example~\ref{ex:whitecounterexample}  takes less than half a second, but checking Conjecture~\ref{conj:white}(\ref{whitequadratic}) for a single matroid of rank $6$ on $12$ elements is already difficult.  Example~\ref{ex:whitecounterexample} and similar constructions do not seem to give a counterexample to other well-known conjectures about basis exchange, such as Gabow's conjecture \cite{Gabow}.

\section{Log-concavity of counts of flats}\label{sec:logconc}

For a matroid $\mathrm{M}$, let $W_i$ denote the number of flats of $\mathrm{M}$ of rank $i$. These are known as the Whitney numbers of the second kind. 

\begin{conjecture}\label{conj:logconc}
Let $\mathrm{M}$ be a matroid of rank $r$. Then for any $1 \le i \le r-1$, we have $W_i^2 \ge W_{i+1} W_{i-1}$. 
\end{conjecture}

This was conjectured by Mason in \cite[Conjecture 1B]{Mason}, where he also conjectured two stronger versions predicting the ultra log-concavity of the $W_i$. The weaker statement that the $W_i$ are unimodal was conjectured by Rota in \cite{RotaICM}. Conjecture~\ref{conj:logconc} is Problem 25(c) in Stanley's list of positivity problems and conjectures in algebraic combinatorics \cite{StanleyPositivity}. There, he also mentions the log-concavity conjectures for the coefficients of the characteristic polynomial of a matroid and for the number of independent sets of a given size in a matroid. He speculates that all three of these conjectures are false. However, the log-concavity of the coefficients of the characteristic polynomial and the number of independent sets were proven in \cites{AHK18}. 

One case that has attracted particular attention is log-concavity at $W_2$: that $W_2^2 \ge W_1 W_3$. This is known as the points-lines-planes conjecture, although this is sometimes used to refer to the stronger conjectures that $W_2^2 \ge \frac{3}{2} W_1 W_3$ or that $W_2^2 \ge \frac{3}{2} \frac{W_1 - 1}{W_1 - 2} W_1 W_3$. This strongest form was proven by Seymour for matroids where each flat of rank $2$ has size at most four \cite{Seymour}. In particular, this applies to all binary matroids. This case was previously proven for graphic matroids in \cite{Stonesifer}, and it was subsequently studied by Dukes \cite{Dukes}. 

In contrast to Conjecture~\ref{conj:white}, there are very few families of matroids for which Conjecture~\ref{conj:logconc} is known. It holds trivially for paving matroids, and it is easy to check for matroids associated to the complete graph or projective geometries. See \cite{Aigner} for a survey and a few additional cases. 

There are two general results which can be interpreted as giving evidence for Conjecture~\ref{conj:logconc}. The first is the log-concavity of the coefficients of the characteristic polynomial of a matroid, proved in \cite{Huh2012,HK12} for realizable matroids and in \cite{AHK18} for all matroids. The coefficients of the characteristic polynomial can be interpreted as counting the number of flats of a given rank, except that we weight a flat $F$ by the M\"{o}bius value $|\mu(\emptyset, F)|$. The second is the Dowling--Wilson top-heavy conjecture \cite{DW1,DW2}, which states that for a matroid of rank $r$, we have $W_0 \le W_1 \le \dotsb \le W_{\lfloor r/2 \rfloor}$, and $W_i \le W_{r-i}$ for $i \le r/2$. This was proven in \cite{HW} for realizable matroids and in \cite{BHMPW20b} for all matroids. The proof was simplified in \cite{LefschetzModule}. 

\def\Len{10}      
\def\dotr{1.5pt} 
\def\bigr{3pt}    
 
\tikzset{
  edge/.style={line width=.8pt, line cap=round, line join=round},
  vert/.style={fill=black},
  term/.style={fill=black},
}
 
\newcommand{\longpath}[2]{
  \pgfmathsetmacro{\Rr}{(\Len*\Len)/(8*(#1)) + (#1)/2}
  \pgfmathsetmacro{\Cy}{(#1) - \Rr}
  \pgfmathsetmacro{\angv}{atan2(\Rr-(#1), -\Len/2)}
  \pgfmathsetmacro{\angw}{atan2(\Rr-(#1),  \Len/2)}
  \draw[edge] (0,0)
    \foreach \i in {1,...,26} {
      -- ({\Len/2 + \Rr*cos(\angv + (\i/26)*(\angw-\angv))},{(#2)*(\Cy + \Rr*sin(\angv + (\i/26)*(\angw-\angv)))})
    };
  \foreach \i in {1,...,25} {
    \fill[vert] ({\Len/2 + \Rr*cos(\angv + (\i/26)*(\angw-\angv))},{(#2)*(\Cy + \Rr*sin(\angv + (\i/26)*(\angw-\angv)))}) circle (\dotr);
  }
}

\begin{figure}
\begin{tikzpicture}
  \draw[edge] (0,0) -- (\Len,0);
 
  \longpath{3.0}{ 1}   
  \longpath{1.4}{ 1}   
  \longpath{3.0}{-1}   
 
  \fill[term] (0,0)    circle (\bigr);
  \fill[term] (\Len,0) circle (\bigr);
  \node[left=7pt]  at (0,0)    {$v$};
  \node[right=7pt] at (\Len,0) {$w$};
\end{tikzpicture}
\caption{The matroid in Example~\ref{ex:logconc}}\label{fig:logconc}
\end{figure}

\begin{example}\label{ex:logconc}
Let $\mathrm{M}$ be the graphic matroid corresponding to the graph consisting of two vertices $\{v, w\}$ and four paths between them, with three of the paths consisting of $26$ \emph{edges} and the other path consisting of a single edge. See Figure~\ref{fig:logconc}. 
\end{example}

\begin{theorem}
Conjecture~\ref{conj:logconc} fails in Example~\ref{ex:logconc}. 
\end{theorem}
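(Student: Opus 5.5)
The plan is to compute every Whitney number $W_i$ of $\mathrm{M}$ exactly, through a generating function, and then exhibit one index $i$ with $W_i^2 < W_{i+1}W_{i-1}$.

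\textbf{Basic data.} The graph has $2 + 3\cdot 25 = 77$ vertices and $3\cdot 26 + 1 = 79$ edges, and it is connected. So $\mathrm{M}$ has rank $r = 76$ and its cycle space has dimension $3$. Write $e$ for the single edge and $P_1, P_2, P_3$ for the three long paths. Each $P_j$ is a series class of size $26$.

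\textbf{Characterizing flats.} A set $F$ is a flat exactly when its complement $C = E \setminus F$ is a union of cocircuits. The cocircuits of this graph are of two kinds:
\begin{itemize}
\item pairs of edges lying in a common $P_j$;
\item the $v$--$w$ cuts, each consisting of $e$ together with one edge from each $P_j$.
\end{itemize}
This gives two types of flats.
\begin{itemize}
\item \emph{Type A:} $e \notin C$, and $|C \cap P_j| \in \{0\} \cup [2,26]$ for each $j$.
\item \emph{Type B:} $e \in C$, and $|C \cap P_j| \ge 1$ for each $j$.
\end{itemize}

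\textbf{Rank of a flat.} I will use $\mathrm{rk}(F) = |F| - \text{nullity}(F)$.
\begin{itemize}
\item For type A, let $t$ be the number of paths meeting $C$. The surviving cycles are $e$ together with each intact path, so the nullity is $3-t$. With $a_j = |C \cap P_j|$, the corank $76 - \mathrm{rk}(F)$ equals $\sum_{a_j>0} (a_j - 1)$.
\item For type B, $F$ is a forest, so the corank is $|C| - 3 = \sum_j a_j - 2$.
\end{itemize}
Set $f(x) = ((1+x)^{26}-1)/x$. Then the number of flats of corank $c$ is
$$W_{76-c} = [x^c]\Bigl( \bigl(f(x) - 25\bigr)^3 + x\, f(x)^3 \Bigr).$$
Here $f(x)-25 = 1 + \sum_{a \ge 2} \binom{26}{a} x^{a-1}$ accounts for type A, and $x f^3$ accounts for type B. Before relying on this formula I will sanity-check it on small cases, for instance $W_{76}=1$ and $W_{75} = 3\binom{26}{2} + 26^3$.

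\textbf{Locating the failure.} With the closed form in hand, I will expand the polynomial and compare $W_i^2$ with $W_{i-1}W_{i+1}$. The structural reason to expect a failure is that the type B term $x f^3$ is a shifted copy of a different log-concave sequence added to the type A term. Their sum need not be log-concave where the two contributions cross in dominance, which should happen somewhere in the middle range of coranks. At the top the sequence is comfortably log-concave: for example $W_{75}^2 \gg W_{76}W_{74}$.

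The main obstacle is finding the precise index and carrying out the resulting comparison of large binomial-sum integers without a computer. I would first evaluate the two contributions separately as closed sums of products of binomial coefficients to find where the dip occurs. Then, for the proof, I would isolate a few dominant terms and bound the remainder, to certify the strict inequality $W_i^2 < W_{i-1}W_{i+1}$ at that index. Failing a clean bound, I would simply write down the three exact integers and compare them.
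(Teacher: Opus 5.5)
Your reduction is correct and is the paper's argument in different language. Complements of flats of $\mathrm{M}$ are unions of cocircuits of $\mathrm{M}$, i.e.\ cyclic sets $S$ of the dual $\mathrm{N}$, and your corank is exactly $|S|-\mathrm{rk}_{\mathrm{N}}(S)$. Your generating function $(f-25)^3+xf^3$ is right and reproduces the paper's counts.

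\textbf{The gap.} The proposal never exhibits an index where log-concavity fails, and that failure is the entire content of the statement. Moreover, your heuristic for locating it points the wrong way. You expect the type A and type B contributions to cross in dominance in the middle range of coranks, and you treat the top as safe after checking $i=75$.

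In fact the crossover happens at the very top. A type A complement cannot meet a path in exactly one edge, so the per-path factor $f-25$ has constant term $1$ rather than $26$. Consequently $xf^3$ dominates in coranks $1$ and $2$: $17576$ against $975$, and $659100$ against $324675$. Then $(f-25)^3$ overtakes in corank $3$: $39442975$ against $13511550$.

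At the other end, $(f-25)^3+xf^3=(1+x)f^3-75f^2+1875f-15625$ and $\deg f^2=50$. So for coranks $c\ge 51$ the numbers $W_{76-c}$ are exactly the coefficients of the log-concave polynomial $(1+x)f^3$. No failure can occur once $c\ge 52$.

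\textbf{The fix.} Read off $[x^1],[x^2],[x^3]$ of your polynomial: $W_{75}=18551$, $W_{74}=983775$ and $W_{73}=52954525$. Then
\[
W_{74}^2=967{,}813{,}250{,}625<982{,}359{,}393{,}275=W_{73}W_{75},
\]
so log-concavity fails at $i=74$, one step below the index you checked. With this computation added, your argument is complete and coincides with the paper's proof.
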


\begin{proof}
The dual of $\mathrm{M}$ is a matroid $\mathrm{N}$ of rank $3$ on a ground set of size $79$ whose simplification is isomorphic to the uniform matroid $\mathrm{U}_{3,4}$, and which has parallel classes of size $1, 26, 26,$ and $26$. The number of flats of $\mathrm{M}$ of rank $76 - i$ is equal to the number of cyclic sets (union of circuits) $S$ in $\mathrm{N}$ with $|S| - \operatorname{rk}_N(S) = i$. 

A cyclic set of $\mathrm{N}$ with $|S| - \operatorname{rk}_N(S) = 1$ consists of either two elements of one of the large parallel classes or it intersects all three large parallel classes in one element and uses the edge in the path of length $1$, giving 
$$W_{75} =  3 \binom{26}{2} + 26^3 = 18,551.$$
A cyclic set of $\mathrm{N}$ with $|S| - \operatorname{rk}_N(S) = 2$ consists of either three elements of one large parallel class; two pairs from two large parallel classes; or two elements from one large parallel class, one from each of the other two and the edge in the path of length $1$. This gives 
$$W_{74} = 3 \binom{26}{3} + 3 \binom{26}{2}^2 + 3 \binom{26}{2} \cdot 26^2 = 983,775.$$
A cyclic set of $\mathrm{N}$ with $|S| - \operatorname{rk}_N(S) = 3$ consists of either four elements from one large parallel class; three elements from one large parallel class and two from another; or intersects all three large parallel classes, which can happen in three different ways. This gives
$$W_{73} = 3 \binom{26}{4} + 6 \binom{26}{3}\binom{26}{2} + \binom{26}{2}^3 + 3 \binom{26}{3} 26^2 + 3 \binom{26}{2}^2 26 = 52,954,525.$$
The result follows from the computation
\begin{equation*}W_{74}^2 = 967,813,250,625 < 982,359,393,275 =  W_{73} \cdot W_{75}. \qedhere\end{equation*}
\end{proof}

\subsection*{How Example~\ref{ex:logconc} was found}
I initially searched for a counterexample to the ultra log-concavity version of Conjecture~\ref{conj:logconc}. After checking that there are no counterexamples to this conjecture on at most $9$ elements, I prompted ChatGPT 5.5 Pro to look for a counterexample around 20 times. I then prompted it to look for counterexamples which are realizable over $\mathbb{F}_2$, $\mathbb{F}_3$, $\mathbb{F}_4$, and $\mathbb{F}_5$, and to consider matroids on large ground sets. After being instructed to look at matroids which are realizable over $\mathbb{F}_5$ and at failures of log-concavity at high indices, it found a variant of Example~\ref{ex:logconc}; see \href{https://chatgpt.com/share/6a30bef8-af94-83ea-babc-574f4598d841}{here}. 

The graph in Example~\ref{ex:logconc} is called a ``generalized theta graph.'' Generalized theta graphs were used by Sokal to show that the roots of characteristic polynomials of graphic matroids are dense in the complex plane \cite{Sokal}. 
Example~\ref{ex:logconc} is  similar to the example used in \cite{MeroniWelsh} to disprove the Merino--Welsh conjecture. There, the authors used a matroid which simplifies to a uniform matroid in which all of the parallel classes have size $2$. To disprove Conjecture~\ref{conj:logconc}, it appears to be advantageous to use parallel classes of very different sizes. 

I have been unable to find a counterexample to the unimodality of the $W_i$. 

\bibliography{matroid}
\bibstyle{amsalpha}

\end{document}